
\documentclass{amsart}

\usepackage{tikz}
\tikzset{
vertex/.style={
  circle, draw=black, very thick, minimum size=0.15cm, fill=black,
  inner sep=0, outer sep=0},
 }

\usepackage{color}

\usepackage[hang,small,bf]{caption}
\setlength{\captionmargin}{25pt}

\definecolor{myurlcolor}{rgb}{0.6,0,0}
\definecolor{mycitecolor}{rgb}{0,0,0.8}
\definecolor{myrefcolor}{rgb}{0,0,0.8}
\usepackage[pagebackref]{hyperref}
\hypersetup{colorlinks, linkcolor=myrefcolor, citecolor=mycitecolor, urlcolor=myurlcolor}

\usepackage{mathtools}
\DeclarePairedDelimiter{\floor}{\lfloor}{\rfloor}

\newcommand{\maps}{\colon} 
\newcommand{\cutwidth}{\mathrm{cw}}
\newcommand{\wirelength}{\mathrm{wl}}
\newcommand{\bandwidth}{\mathrm{bw}}
\newcommand{\type}{\mathrm{Type}}
\newcommand{\Split}{\mathrm{Split}}
\newcommand{\ccw}{\mathrm{ccw}}
\newcommand{\lcw}{\mathrm{lcw}}
\newcommand{\cwl}{\mathrm{cwl}}
\newcommand{\lwl}{\mathrm{lwl}}
\newcommand{\cbw}{\mathrm{cbw}}
\newcommand{\lbw}{\mathrm{lbw}}
\newcommand{\wl}{\mathrm{wl}}
\newcommand{\bw}{\mathrm{bw}}
\newcommand{\cw}{\mathrm{cw}}

\newcommand{\define}[1]{{\bf \boldmath{#1}}}
\newtheorem{thm}{Theorem}

\newtheorem{lemma}[thm]{Lemma}

\theoremstyle{definition}

\newtheorem{claim*}{Claim}

\title{The Cyclic Cutwidth of $Q_n$}
\author{Jason Erbele\\with\\Dr.\ Joseph Chavez \& Dr.\ Rolland Trapp}
\date{August 21, 2003}
\email{erbele@math.ucr.edu}

\begin{document}

\begin{abstract}
In this article the cyclic cutwidth of the \(n\)-dimensional cube is explored.  It has been
conjectured by Dr.\ Chavez and Dr.\ Trapp that the cyclic cutwidth of \(Q_n\) is minimized with the
Graycode numbering.  Several results have been found toward the proof of this conjecture.
\end{abstract}

\maketitle

\section{Introduction}
\label{intro}
Let \(G = (V, E, \partial)\) represent a graph with a set, \(V\), of vertices, a set, \(E\), of edges, and
a function \(\displaystyle \partial \maps E \to \binom{V}{2}\) which identifies the two distinct
vertices incident to each edge.  \(G\) has often been analogised to an electric circuit in the
literature.

A numbering of the vertices, \(\eta\), is a function that assigns a distinct number from \(1\) to
\(m\) to each of the vertices in \(G\), where \(m = |V|\).  A numbering can most naturally be
thought of as an embedding of \(G\) into a linear chassis, though other host graphs may be
considered.  The main emphasis of this paper will be with a circular host graph.  To distinguish
these two host graphs, the letters `\emph{l}' and `\emph{c}' will be used as prefixes for linear and
cyclic, respectively.

There are three major properties of an embedding of a graph: \emph{bandwidth} (\(\bandwidth\)),
\emph{wirelength} (\(\wirelength\)), and \emph{cutwidth} (\(\cutwidth\)) \cite{Chung}.
\[   \lbw(G,\eta) = \max\left\{|\eta(v) - \eta(w)| : \{v,w\} \in E\right\}.\]

\noindent
That is, \define{\(\lbw\)} is the maximum distance between two vertices connected by an edge.  For the
graph, \(\lbw(G)\) is the minimum of these \(\lbw(G,\eta)\)'s over all numberings.
\[   \lwl(G,\eta) = \sum_{\{v,w\} \in E} |\eta(v) - \eta(w)|.\]

\noindent
That is, \define{\(\lwl\)} is the sum of the lengths of all the edges.  \(lwl(G)\) is the minimum,
again, over all numberings.
\[   \lcw(G,\eta) = \max_\ell |\{\{v,w\} \in E : \eta(v) \leq \ell < \eta(w)\}|.\]

\noindent
That is, \define{\(\lcw\)} is the maximum number of edges that pass between two consecutively
numbered vertices. \(\lcw(G)\) is the minimum of these maxima over all numberings.  \(\cbw,\)
\(\cwl,\) and \(\ccw\) are defined similarly to their linear counterparts, with the appropriate
adjustments made, particularly, vertices are numbered congruence classes instead of numbers.  In
this paper only \(\wl\) and \(\cw\) are of interest.  \textbf{Note:}  For \(\cbw\), \(\cwl\), and
\(\ccw\) there are two choices for which direction an edge should go.  For \(\cbw\) and \(\cwl\) we
clearly only want to choose the direction that minimizes the length of the edge.  However, there are
graphs with numberings that have a smaller \(\ccw\) when an edge goes the long way around.

In finding the values of \(\cw(G)\), \(\wl(G)\) and \(\bw(G)\), a useful function from the area of
discrete isoperimetric problems, the theta function, can be used.  This function will be limited in
its use here as follows:
\[   \theta(S) = |\{v,w\} \in E,\ v \in S,\ w \notin S|\]
\[   \text{and } \theta(\ell) = \min_{|S| = \ell} \theta(S).\]

\noindent
In other words, \(\theta(S)\), \(S \subseteq V\) is the number of edges that have exactly one vertex
in \(S\), and over all sets \(S \subseteq V\) of size \(\ell\), \(\theta(\ell)\) is the least number
of edges that have exactly one vertex in the set.  The notation \(\theta_n(S)\) and
\(\theta_n(\ell)\) will be used when \(G\) is \(Q_n\).  For clarity, lowercase letters will be used
to represent numbers and uppercase letters will be reserved for sets.

Finding the value of \(\cw(G)\)  is called \emph{the cutwidth problem}.  The cutwidth problem is
NP-complete for graphs in general \cite{Chung}.  However, the solution to the cutwidth problem is
known for special cases like an \(n\)-dimensional cube (\(Q_n\)) embedded on linear and grid host
graphs.  (When the host graph is a grid, the term \emph{congestion} is used instead of cutwidth.)  A
conjecture has been made for \(\cw(Q_n)\) when the host graph is a circle, called the CT conjecture.

The CT conjecture (named after Chavez and Trapp) states that the Graycode numbering gives
\(\ccw(Q_n)\).  Or, as a formula, the CT conjecture asserts
\(\ccw(Q_n) = \floor*{ \frac{5 \cdot 2^{n-2}}{3} }\)
when \(n \geq 2\).  The Graycode numbering is recursively defined with a base case in \(Q_2\).
\(Q_n\) is two copies of \(Q_{n-1}\), so if we know the numbering for \(Q_{n-1}\), that numbering is
copied in reverse on the second \(Q_{n-1}\) to give the numbering for \(Q_n\).  One feature of the
Graycode numbering is that consecutively numbered vertices are adjacent to each other.

  \begin{figure}[!h]
    \centering
      \begin{tikzpicture}
        \draw (0,0) -- +(2,0);
        \draw (0,2) -- +(2,0);
        \draw (0.5,0.5) -- +(2,0);
        \draw (0.5,2.5) -- +(2,0);
        \draw (0,0) -- +(0,2);
        \draw (2,0) -- +(0,2);
        \draw (0.5,0.5) -- +(0,2);
        \draw (2.5,0.5) -- +(0,2);

        \draw[dashed] (0,0) -- +(0.5,0.5);
        \draw[dashed] (2,0) -- +(0.5,0.5);
        \draw[dashed] (0,2) -- +(0.5,0.5);
        \draw[dashed] (2,2) -- +(0.5,0.5);
        \node at (0.1,-0.2) {1};
        \node at (0.7,0.7) {1$'$};
        \node at (0.2,1.8) {2};
        \node at (0.7,2.7) {2$'$};
        \node at (1.8,1.8) {3};
        \node at (2.5,2.7) {3$'$};
        \node at (2,-0.2) {4};
        \node at (2.7,0.7) {4$'$};
        \draw (4,0) -- +(2,0);
        \draw (4,2) -- +(2,0);
        \draw (4.5,0.5) -- +(2,0);
        \draw (4.5,2.5) -- +(2,0);
        \draw (4,0) -- +(0,2);
        \draw (6,0) -- +(0,2);
        \draw (4.5,0.5) -- +(0,2);
        \draw (6.5,0.5) -- +(0,2);

        \draw (4,0) -- +(0.5,0.5);
        \draw (6,0) -- +(0.5,0.5);
        \draw (4,2) -- +(0.5,0.5);
        \draw (6,2) -- +(0.5,0.5);
        \node at (4.1,-0.2) {1};
        \node at (4.2,1.8) {2};
        \node at (5.8,1.8) {3};
        \node at (6,-0.2) {4};
        \node at (6.7,0.7) {5};
        \node at (6.5,2.7) {6};
        \node at (4.7,2.7) {7};
        \node at (4.7,0.7) {8};
      \end{tikzpicture}
    \caption[Generating the Graycode numbering for \(Q_3\) from \(Q_2\)]{Generating the Graycode
                                            numbering for \(Q_3\) from \(Q_2\).\label{graycode}}
  \end{figure}
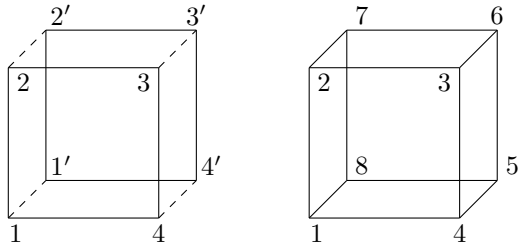

After some historical background, there is a section summing up many of the important results,
several of which are unpublished, pertaining to \(\ccw(Q_n)\).  The fourth section presents the
results from Ching J.\ Guu's Ph.D.\ dissertation \cite{Guu}, which are used in the new results of the
fifth section.

\section{Historical background}
\label{background}
Before 1996 \(\ccw(Q_3)\) was already known from using exhaustive searching by computer.  In
\cite{James} Beatrice James found an alternate method of determining \(\ccw(Q_n)\) and applied it to
\(Q_3\) and \(Q_4\).  Her method is extendable to higher dimension cubes, however, the number of
cases blows up.  In 2001 Ryan Aschenbrenner used another method \cite{Aschenbrenner} to find
\(\ccw(Q_5)\).  His method is also extendable, but has similar problems.  Currently, Candi Castillo
\cite{Castillo} is using Aschenbrenner's method to prove \(\ccw(Q_6)\).  So far as has been tested,
the CT conjecture has held.

During this 1996--present period other advances have been made with variations on the \(\ccw(Q_n)\)
problem.  In 1997 Ching J.\ Guu claimed in \cite{Guu} that \(\cwl(Q_n)\) is minimized with the
Graycode numbering.  In 2000 Bezrukov et al.\ \cite{BCHRS} published their proof that \(\lcw(Q_n)\)
is minimized with a lexicographic numbering.  In the proof they used an equivalent discrete
isoperimetric problem.  Also proved in that paper is the congestion of \(Q_n\), which is closely
connected to \(\lcw(Q_n)\).  In \cite{Harper} the Hale's numbering is shown to minimize
\(\lbw(Q_n)\).

\section{Important results}
\label{results}
\subsection{\(\ccw(Q_n)\)--Methods}
\label{methods}\({}\)\\
James's method \cite{James} is based on the fact that as far as cuts are concerned, there are only
two ways of looking at each disjoint \(Q_2\).  A \(Q_2\) can contribute one cut all the way around
the cycle or two cuts in a local region.  This method can be extended in two ways--simply increase
the cases as \(Q_n\) goes up in dimension or increase the size of the subcubes.  The second way
increases the number of cases because the number of ways of representing larger subcubes increases.
It appears that neither extension is suitable for proving \(\ccw(Q_n)\).

Aschenbrenner's method \cite{Aschenbrenner} involves a diameter that cuts the cycle in two pieces
and a pair of disjoint \(Q_{n-1}\)'s that are split by the diameter.  One can look at the size of
the split given a particular pair of \(Q_{n-1}\)'s with respect to a particular diameter.  The split
size is the number of vertices from each \(Q_{n-1}\) that are on one side of the diameter.  However,
more than one diameter may be considered.  In fact, the diameter is free to move around the cycle,
and the choice of \(Q_{n-1}\)'s is free as well, so long as they are complementary \(Q_{n-1}\)'s.

In looking at a single diameter, Aschenbrenner developed a useful notation to help find cutwidth.
His use of the notation is for \(Q_{n-1}\)'s though it can be extended to consider any pair of
complementary subgraphs:
\[   \left[
      \begin{array}{ccc}
  |A|      & |e_{AB}|            & |B|      \\
  \hline
  |e_{AC}| & |e_{AD}| + |e_{BC}| & |e_{BD}| \\
  \hline
  |C|      & |e_{CD}|            & |D|
      \end{array}
     \right].
\]

In the most general sense of the notation, vertex set \(A \cup C\) is that of one subgraph, and
vertex set \(B \cup D\) is the set for the other subgraph.  The first row indicates what is on one
side of the diameter.  The other side of the diameter is indicated by the third row.  The middle row
counts the number of edges that cross the diameter.  The number of edges that have one vertex in
\(X\) and the other vertex in \(Y\) is \(|e_{XY}|\).

In Aschenbrenner's paper it is mentioned that when there is a 5/11 split (or 11/5 split, depending
on which \(Q_4\) you count first and on which side of the cycle you count) the problem was easy for
\(Q_5\).  Generally, if there is a \(\frac{2}{3} / \frac{1}{3}\) split the problem is solved.  More
precisely, a \(\frac{2^n + (-1)^{n+1}}{3}\ /\ \frac{2^{n-1} + (-1)^n}{3}\) split is an
easy split for \(Q_n\).

\subsection{Proof for a \(\frac{2}{3} / \frac{1}{3}\) split}
\label{split}
\begin{thm}
\label{splitthm}
When there is at least a \(\frac{2^n + (-1)^{n+1}}{3}\ /\ \frac{2^{n-1} + (-1)^n}{3}\)
split, the largest cut is at least \(\frac{5 \cdot 2^{n-2} - 1}{3}\) when \(n\) is odd
or \(\frac{5 \cdot 2^{n-2} - 2}{3}\) when \(n\) is even.
\end{thm}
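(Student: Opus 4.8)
The plan is to work with a fixed numbering $\eta$ for which such a split occurs, pick a diameter $\delta$ of the cycle and a complementary pair of subcubes $Q_{n-1}$ realizing the split, and adopt Aschenbrenner's $3\times 3$ array. Writing $A\cup C$ for the vertex set of the first subcube and $B\cup D$ for the second, with $A,B$ on one side of $\delta$ and $C,D$ on the other, the equalities $|A|+|B|=|C|+|D|=|A|+|C|=|B|+|D|=2^{n-1}$ force $|A|=|D|$ and $|B|=|C|$; set $a=|A|=|D|$ and $a'=|B|=|C|=2^{n-1}-a$. The split hypothesis then becomes a constraint on $\min(a,a')$ (equivalently on $|a-a'|$), which I would record alongside the parity facts $2^{k}\equiv(-1)^{k}\pmod 3$ and the resulting identity $\floor*{\tfrac{5\cdot 2^{n-2}}{3}}=\tfrac{5\cdot 2^{n-2}-1}{3}$ for $n$ odd and $=\tfrac{5\cdot 2^{n-2}-2}{3}$ for $n$ even, so that the target is exactly $\floor*{\tfrac{5\cdot 2^{n-2}}{3}}$, the conjectured value.

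Next I would lower bound the number $T=|e_{AC}|+|e_{AD}|+|e_{BC}|+|e_{BD}|$ of edges meeting $\delta$. There are two inputs. First, the edge-isoperimetric theorem for the cube applied inside each $Q_{n-1}$ gives $|e_{AC}|\ge\theta_{n-1}(a)$ and $|e_{BD}|\ge\theta_{n-1}(a)$ (using $\theta_{n-1}(a)=\theta_{n-1}(2^{n-1}-a)$). Second, the perfect matching between the two complementary subcubes, together with the identities $|e_{AB}|+|e_{AD}|=a$ and $|e_{AB}|+|e_{CB}|=a'$, forces $|e_{AD}|+|e_{BC}|\ge|a-a'|$. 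Since the $T$ edges meeting $\delta$ are distributed between the two cuts determined by $\delta$, some cut carries at least $\lceil T/2\rceil$, and in the moderately uneven part of the allowed range (near the boundary value $\min(a,a')=\tfrac{2^{n-1}+(-1)^{n}}{3}$) this bound already reaches the target.

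The harder part is the remainder of the range, where $\lceil T/2\rceil$ degrades to roughly $2^{n-2}$ — a factor of about $5/3$ short — near a balanced split and near a complete separation. There I would exploit extra structure. Rather than using only the two arcs cut off by $\delta$, I would consider a single arc $S$ obtained by adjoining one side's portion of a subcube to part of the other side: for a suitable placement $S$ straddles one cut of $\delta$, its size lands where the \emph{non-monotone} profile $\theta_n$ is large (away from $2^{n-1}$), and spreading $\theta_n(S)$ over its two boundary cuts beats $2^{n-2}$. Where even this is not enough I would invoke Guu's cyclic-wirelength result: the largest cut is at least the average, $\cwl(G,\eta)/2^{n}\ge\cwl(Q_n)/2^{n}$, and combining the averaging estimate with the localization forced by $\delta$ pins a heavy cut near $\delta$. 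All of these bounds are robust against the adversary's freedom to route an edge the long way around, since a rerouted arc-internal edge then meets \emph{both} cuts of $\delta$ and a rerouted crossing edge still meets exactly one of them.

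Finally I would take the worst case of the combined bound over $a$ in the allowed range and read off the two stated values, treating $n$ odd and $n$ even separately — the split there is genuine, since both the relevant isoperimetric numbers $\theta_{n-1}$ and the floor in the target depend on $n\bmod 2$. The main obstacle is precisely the third step: the crossing-edge count alone is short of the target by a constant factor in the worst sub-ranges of the split, so the real content is producing the additional heavy cut there, and I expect this to require a somewhat intricate case analysis on the split size $a$ and on where $A,B,C,D$ sit within their arcs.
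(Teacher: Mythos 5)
Your first two steps reproduce the paper's core computation at the boundary split: the isoperimetric bound inside each $Q_{n-1}$ giving $|e_{AC}|,|e_{BD}|\ge\theta_{n-1}(a')$, the perfect-matching count giving $|e_{AD}|+|e_{BC}|\ge|a-a'|$, and halving the total over the diameter's two cuts; at $a'=\frac{2^{n-1}+(-1)^n}{3}$ this is exactly the paper's bound $\theta_{n-1}\bigl(\frac{2^{n-1}+(-1)^n}{3}\bigr)+\frac{2^{n-2}+(-1)^{n+1}}{3}$, which the BCHRS recursion evaluates to the stated values. The genuine gap is in how you handle the hypothesis ``at least'' such a split. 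The paper does not fight the whole range of $a$ at a fixed diameter: Lemma~\ref{splitlem} is a discrete intermediate-value argument --- rotate the diameter one vertex pair at a time through $2^{n-1}$ steps, which returns it to the same position with reversed orientation and so turns an $x+k/y-k$ split into a $y-k/x+k$ split; since each rotation changes the left side by at most one, an exact $x/y$ split must occur at some intermediate diameter. Hence only the exact split $\frac{2^n+(-1)^{n+1}}{3}\,/\,\frac{2^{n-1}+(-1)^n}{3}$ ever needs to be analyzed, precisely where your crossing-edge bound already meets the target. You correctly observe that the bound collapses to roughly $2^{n-2}$ near complete separation, but that observation should have pointed you to moving the diameter, not to new machinery.

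The rescue you sketch for the uneven sub-range does not close the gap. Invoking Guu's cyclic-wirelength result is both logically unavailable --- Theorem~\ref{splitthm} is proved unconditionally, whereas the paper flags an error in Guu's proof and only \emph{assumes} her result in Section~\ref{news} --- and quantitatively insufficient, since the averaging bound it yields is $\cwl(Q_n)/2^n=2^{n-2}+2^{n-3}-\frac{1}{2}$, about $\frac{3}{2}\cdot 2^{n-2}$, still short of $\frac{5}{3}\cdot 2^{n-2}$; ``combining the averaging estimate with the localization forced by $\delta$'' is not an argument as stated. The straddling-arc idea is likewise left entirely unexecuted, and your concern about balanced splits is moot, since the hypothesis restricts attention to splits at least as uneven as the $\frac{2}{3}/\frac{1}{3}$ value. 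In short, the one missing idea is the rotation lemma; with it, your third step (the part you yourself identify as the ``real content'') becomes unnecessary, and without it your proposal proves the theorem only in the case where the split is exactly the stated value.
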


\begin{lemma}
\label{splitlem}
When there is a split greater than \(x/y\), an \(x/y\) split also occurs.
\end{lemma}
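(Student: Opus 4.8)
The plan is to keep the pair of complementary subgraphs fixed and rotate the diameter around the cycle one edge at a time, tracking how the split changes; the point is that it changes by at most one vertex at each step, so it cannot jump over the target value $x/y$. We may assume $x \ge y$ (replacing $x/y$ by $y/x$ if necessary — this refers to the same split). Place the $2^n$ vertices of $Q_n$ on the cycle and take the diameter to cut two antipodal edges of the cycle, so that each of the two arcs — call them side $1$ and side $2$ — contains exactly $2^{n-1}$ vertices and no vertex lies on the diameter; this is the configuration in which a ``$p/q$ split'' is unambiguous, and in it one necessarily has $p + q = 2^{n-1}$, in particular $x + y = 2^{n-1}$. (Here the two complementary subgraphs each have $2^{n-1}$ vertices, as the pair of $Q_{n-1}$'s does.)

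Let $f$ denote the number of vertices of the first subgraph lying on side $1$. First I would record two facts. (i) Advancing the diameter by one step removes exactly one vertex from side $1$ and adds exactly one vertex to side $1$, so $f$ changes by $-1$, $0$, or $+1$. (ii) After $2^{n-1}$ steps the diameter has returned to where it began but the two sides have been interchanged, so $f$ has passed from its initial value $a$ to $2^{n-1}-a$, the number of first-subgraph vertices that were originally on side $2$.

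Now suppose a split greater than $x/y$ occurs: for some diameter, one side contains more than $x$ vertices of the first subgraph. Label that side as side $1$, so the initial value is $a > x$. Then by (ii) the other endpoint of the walk is $2^{n-1}-a < 2^{n-1}-x = y \le x < a$, so $x$ lies strictly between the two endpoints $2^{n-1}-a$ and $a$. Since by (i) the walk $k \mapsto f(k)$ takes integer steps of absolute value at most $1$, a discrete intermediate-value argument gives $f(k) = x$ for some $k$. For that position of the diameter, side $1$ contains exactly $x$ vertices of the first subgraph and hence exactly $2^{n-1}-x = y$ of the second, i.e.\ an $x/y$ split occurs, as claimed.

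There is no serious obstacle here; what needs care is the set-up rather than the argument. One must fix a convention for the diameter so that the notion of an ``$a/b$ split'' stays meaningful as the diameter moves — cutting two antipodal edges, never passing through a vertex, and keeping each side at $2^{n-1}$ vertices throughout, does this — and one must check that the target $x$ really falls between the two endpoints of the rotation walk, which is exactly the chain $2^{n-1}-a < y \le x < a$ above and is where $x + y = 2^{n-1}$ (and the harmless normalization $x \ge y$) is used.
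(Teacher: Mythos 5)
Your proof is correct and is essentially the paper's own argument: rotate the diameter one vertex pair at a time so the count on one side changes by at most $1$ per step, note that after $2^{n-1}$ steps the two sides are interchanged (so the count goes from $a>x$ to $2^{n-1}-a\le y\le x$), and conclude by a discrete intermediate-value argument that an $x/y$ split occurs. Your write-up merely makes the setup (diameter cutting antipodal edges, $x+y=2^{n-1}$) more explicit than the paper does.
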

\begin{proof}[Proof of Lemma~\ref{splitlem}]
With no loss of generality we can assume \(x \geq y\).  Let \(k \geq 0\).  If an \(x+k / y-k\) split
exists, the diameter can be rotated one vertex pair at a time, \(2^{n-1}\) times.  At that point,
the diameter will be in the same position but oppositely oriented to its original position, giving a
\(y-k / x+k\) split.  With each rotation the left side of the split can increase by 1, decrease by
1, or stay the same.  Since \(y-k \leq x \leq x+k\), the left side of the split must have been \(x\)
at some point.  Thus, an \(x/y\) split exists.
\end{proof}

\begin{proof}[Proof of Theorem~\ref{splitthm}]
From the lemma, only a \(\frac{2^n + (-1)^{n+1}}{3}\ /\ \frac{2^{n-1} + (-1)^n}{3}\)
split has to be proven.  Using Aschenbrenner's notation, this split is written:
\[   \left[
      \begin{array}{ccc}
\frac{2^n + (-1)^{n+1}}{3}                             & \frac{2^{n-1} + (-1)^n}{3}  & \frac{2^{n-1} + (-1)^n}{3} \\
\hline
\theta_{n-1} \left( \frac{2^{n-1} + (-1)^n}{3} \right) & \frac{2^{n-1} + 2(-1)^{n+1}}{3} & \theta_{n-1} \left( \frac{2^{n-1} + (-1)^n}{3} \right) \\
\hline
\frac{2^{n-1} + (-1)^n}{3}                             & \frac{2^{n-1} + (-1)^n}{3}  & \frac{2^n + (-1)^{n+1}}{3}
      \end{array}
     \right].
\]

\noindent
Each vertex in each \(Q_{n-1}\) is connected by an edge to one vertex in the other \(Q_{n-1}\).  So
\(\frac{2^{n-1} + (-1)^n}{3}\) edges connecting \(Q_{n-1}\)'s is the maximum that can stay on each
side of the diameter.  This minimizes the number of edges between \(Q_{n-1}\)'s that go through the
diameter (See Figure~\ref{splitfig} and accompanying example).

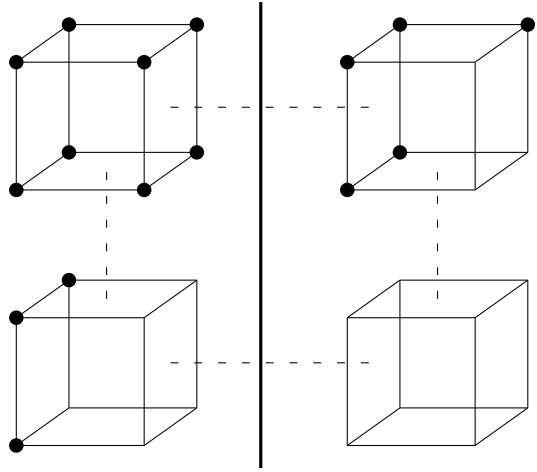
\begin{figure}[!h]
 \centering
  \begin{tikzpicture}
   \foreach \x in {0, 1.7, 4.4, 6.1}
    \foreach \y in {0, 1.7, 3.4, 5.1}
     \draw (\x,\y) -- +(0.7,0.5);
   \foreach \x in {0, 4.4}
    \foreach \y in {0, 1.7, 3.4, 5.1}
     {
     \draw (\x,\y) -- +(1.7,0);
     \draw (\x + 0.7,\y + 0.5) -- +(1.7,0);
     }
   \foreach \x in {0, 1.7, 4.4, 6.1}
    \foreach \y in {0, 3.4}
     {
     \draw (\x,\y) -- +(0,1.7);
     \draw (\x + 0.7,\y + 0.5) -- +(0,1.7);
     }
   \draw[loosely dashed] (1.2,1.95) -- +(0,1.7);
   \draw[loosely dashed] (5.6,1.95) -- +(0,1.7);
   \draw[loosely dashed] (2.05,1.1) -- +(2.7,0);
   \draw[loosely dashed] (2.05,4.5) -- +(2.7,0);
   \draw[very thick] (3.25,-0.3) -- +(0,6.2);
   \foreach \y in {0, 1.7, 3.4, 5.1}
    \node[vertex] at (0,\y) {};
   \foreach \x in {1.7, 4.4}
    \foreach \y in {3.4, 5.1}
     \node[vertex] at (\x,\y) {};
   \foreach \x in {0.7, 2.4, 5.1}
    \foreach \y in {3.9, 5.6}
     \node[vertex] at (\x,\y) {};
   \node[vertex] at (0.7,2.2) {};
   \node[vertex] at (6.8,5.6) {};
  \end{tikzpicture}
 \caption[\(Q_5\) as \(Q_3 \times Q_2\)]{\(Q_5\) as \(Q_3 \times Q_2\).  Solid vertices represent
          vertices on one side of the diameter.  \(Q_4\)'s are separated by the solid
          line.\label{splitfig}}
\end{figure}
In Aschenbrenner's notation Figure~\ref{splitfig} would be represented:
\[ \left[ \begin{array}{ccc}
    11 & 5 & 5  \\
\hline
    10 & 6 & 10 \\
\hline
     5 & 5 & 11
\end{array} \right].
\]

\noindent
Hence, the minimum number of edges that cross the diameter is
\[2 \cdot \theta_{n-1} \left( \frac{2^{n-1} + (-1)^n}{3} \right) + \frac{2^{n-1} + 2(-1)^{n+1}}{3},\]
and \(\ccw(Q_n)\) is at least half of this.  In \cite{BCHRS} a recursion is given for finding \(\theta_n(\ell)\):

\[
   \theta_n(\ell) = 
     \begin{cases}
        2 \ell + \theta_{n-2}(\ell)            & \text{if } 0 \leq \ell \leq 2^{n-2} \\
        2^{n-1} + \theta_{n-2}(\ell - 2^{n-2}) & \text{if } 2^{n-2} \leq \ell \leq 2^{n-1}
     \end{cases}
\]

Two cases arise--one when \(n\) is odd, and one when \(n\) is even.

\case[When \(n\) is odd]

\begin{align*}
\ccw(Q_n) & \geq \theta_{n-1} \left( \frac{2^{n-1} - 1}{3} \right) + \frac{2^{n-2} + 1}{3} \\
          & = \theta_{n-1} (1 + 2^2 + 2^4 + \dotsb + 2^{n-3}) + \frac{2^{n-2} + 1}{3} \\
          & = 2 \left( \frac{2^{n-1} - 1}{3} \right) + \frac{2^{n-2} + 1}{3} \\
          & = \frac{5 \cdot 2^{n-2} - 1}{3}.
\end{align*}
This is the same as \(\ccw(Q_n, \text{Graycode})\) for odd \(n\).

\case[When \(n\) is even]
\begin{align*}
\ccw(Q_n) & \geq \theta_{n-1} \left( \frac{2^{n-1} - 2}{3} + 1 \right) + \frac{2^{n-2} - 1}{3} \\
          & = \theta_{n-1} (1 + 2^1 + 2^3 + \dotsb + 2^{n-3}) + \frac{2^{n-2} - 1}{3} \\
          & = \frac{2^n - 1}{3} + \frac{2^{n-2} - 1}{3} \\
          & = \frac{5 \cdot 2^{n-2} - 2}{3}.
\end{align*}
This is the same as \(\ccw(Q_n, \text{Graycode})\) for even \(n\).
\end{proof}

\section{Cyclic wirelength of \(Q_n\)}
\label{cwl}
In Ching J.\ Guu's Ph.D.\ dissertation \cite{Guu}, a proof for \(\cwl(Q_n)\) is claimed.  Her claim
is that the Graycode numbering minimizes cyclic wirelength.  En route, she creates a derived network
to convert the problem into a discrete isoperimetric problem.  Then she defines
\define{\(\type(S)\)}, \(S \subseteq V_n\).  There are \(2n\) \(Q_{n-1}\)'s in \(Q_n\), which are
denoted \(H_i\).
\[  \type(S) = \min_{1 \leq i \leq 2n} |S \cap H_i|.\]

That \(\type(S)\) and the size of a split are related is not immediately obvious, so a more formal
(and slightly restrictive) definition for split size will be used in this section.  For \(S
\subseteq V_n\), the size of the split of \(S\) is,
\[  \Split(S) = \max_{1 \leq i \leq 2n} |S \cap H_i|.\]

\noindent
This definition gives only the bigger side of the split, but allows the splitting line to be a
nondiameter.  Now it should be fairly evident that \(\type(S) + \Split(S) = |S|\), and that
consequently, the splits used earlier are \(\type(S) / \Split(S)\) or \(\Split(S) / \type(S)\),
depending on the orientation of the splitting line.

\subsection{\emph{big} and \emph{small}}
\label{bigsmall}\({}\)\\
When \(|S| = 2^{n-1}\), \(0 \leq \type(S) \leq 2^{n-2}\).  Guu abbreviates \(\type(S)\) with \(t\),
and calls a set \define{\emph{big}} if \(t \geq 2^{n-3}\) and \define{\emph{small}} if \(t \leq
2^{n-3}\).  When a path in the derived network goes from a set to its complement, if all the sets in
the path are \emph{small}, the Graycode numbering is shown to minimize \(\cwl(Q_n)\).  When a set,
\(S'\), in the path is of \emph{big} type, Guu claims that \(\theta_n(S') \geq \frac{3}{4} \cdot
2^n\), which is large enough to not need any further consideration.  Her proof of this inequality,
however, contains at least one error.  It has not been determined how grave the error is.  Since the
approach may have some utility, the outline is included here.

First, \(f(x) = \frac{3}{4} - \frac{64}{7} (x - \frac{1}{2})^2\) is introduced, which has the
property that \(f(x-t) + f(x+t) + 2t \geq 2 f(x)\) when \(0 \leq t \leq \frac{7}{64}\).  Next, it is
demonstrated that when \(\frac{2^n}{24} \leq \type(S) \leq (\frac{1}{24} + \frac{7}{64}) 2^n\) and
\(|S| \leq 2^{n-1}\), \(\theta_n(S) \geq f(x) \cdot 2^n\).  The third and final step is that when
\(\type(S) \geq (\frac{1}{24} + \frac{7}{64}) 2^n\), \(\theta_n(S) \geq f(x) \cdot 2^n\).

An error occurs in the proof of the final step.  She takes \(S_1 \cup S_2 = S\), with \(S_1 \cap
S_2 \neq \{\}\).  Then she continues \(|S_1| + |S_2| = |S|\).

\section{New results}
\label{news}
\subsection{A new lower bound}
\label{lowerbd}\({}\)\\
Up to this point, the best lower bound known for \(\ccw(Q_n)\) was \(\ccw(Q_n) \geq \frac{1}{2}
\lcw(Q_n)\).  We also know an upper bound for \(\ccw(Q_n)\) (which is sharp at least up to \(n = 6\))
is \(\ccw(Q_n) \leq \floor*{ \frac{5}{8} \lcw(Q_n) }\).  (This formula is incorrect for the trivial
cases of \(n \leq 1\), when no cycles exist.)  For this section it will be assumed that Guu's
results are correct, and that the Graycode numbering minimizes the cyclic wirelength of \(Q_n\).
With this assumption, a larger lower bound can be calculated.  Before calculating this new lower
bound we first must know what the value of \(\cwl(Q_n)\) is.

\begin{claim*}
\(\cwl(Q_n) = 2^{2n-2} + 2^{2n-3} - 2^{n-1}\)
\end{claim*}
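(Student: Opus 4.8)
The plan is to invoke Guu's theorem (which this section assumes), reducing the claim to the evaluation of \(\cwl(Q_n,\text{Graycode})\), and then to read that value off the recursive definition of the Graycode numbering. Write \(Q_n\) as two copies of \(Q_{n-1}\) joined by a perfect matching. In the Graycode numbering the first copy occupies cycle positions \(1,\dots,2^{n-1}\) and carries the Graycode numbering of \(Q_{n-1}\), while the second occupies positions \(2^{n-1}+1,\dots,2^n\) and carries that numbering reversed. Every edge of \(Q_n\) is therefore either internal to one of the two copies or is one of the \(2^{n-1}\) matching edges, so \(\cwl(Q_n,\text{Graycode})\) splits as a sum over these three families.

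The first observation is that an edge internal to a copy has both endpoints among \(2^{n-1}\) consecutive positions, so the gap between their numbers is at most \(2^{n-1}-1 \le \tfrac12|V|\) and its cyclic length equals its ordinary (linear) length. Each copy therefore contributes exactly \(\lwl(Q_{n-1},\text{linear Graycode})\), which I will abbreviate \(L_{n-1}\). An entirely analogous but purely linear recursion handles \(L_k\): splitting the linear Graycode of \(Q_k\) into two consecutive copies, the matching edge leaving position \(p\) lands on position \(2^k+1-p\) and has linear length \(2^k+1-2p\), giving \(L_k = 2L_{k-1}+2^{2k-2}\) with \(L_1 = 1\); dividing by \(2^k\) and telescoping yields \(L_k = 2^{2k-1}-2^{k-1}\), which I would check against \(L_2 = 6\) (the four-cycle).

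The only genuinely cyclic computation is the matching-edge contribution. The matching edge leaving position \(p\) \((1\le p\le 2^{n-1})\) lands on position \(2^n+1-p\), so the two arcs joining its endpoints have lengths \(2p-1\) and \(2^n-2p+1\), and the cyclic length is the smaller of these --- equal to \(2p-1\) when \(p\le 2^{n-2}\) and to \(2^n-2p+1\) otherwise. By the symmetry \(p\leftrightarrow 2^{n-1}+1-p\) the total is \(2\sum_{p=1}^{2^{n-2}}(2p-1)=2(2^{n-2})^2=2^{2n-3}\). Combining the three families, \(\cwl(Q_n,\text{Graycode}) = 2L_{n-1}+2^{2n-3} = 2(2^{2n-3}-2^{n-2})+2^{2n-3} = 2^{2n-2}+2^{2n-3}-2^{n-1}\), as claimed; I would record \(n=2\) (both sides equal \(4\)) as the base case and note that the identity is meant for \(n\ge2\), since cycles require at least three vertices.

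I expect the obstacle to be bookkeeping rather than anything conceptual: tracking the position-to-position map of the matching edges correctly through the reversal, justifying for \emph{every} \(p\) that the cyclic length is the minimum of the two arcs (rather than assuming the ``short way'' without checking), and confirming that no internal edge wraps around the cycle. Each of these is routine, but a sign slip in the matching map or an off-by-one in the range of \(p\) would corrupt the final constant, so the care belongs there.
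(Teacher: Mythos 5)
Your proposal is correct, but it reaches the formula by a genuinely different route than the paper. The paper tallies the Graycode wire lengths directly from the circular drawing: it classifies the chords into horizontal/vertical wires and primary, secondary, tertiary, \dots\ diagonals, asserts there are \(4, 2^2, 2^3, \dots\) such families with lengths \(1, 3, \dots, 2^{n-1}-1\), then \(1, 3, \dots, 2^{n-2}-1\), and so on, and sums the resulting odd-number series to get \(2^{2n-2} + 2^{2n-4} + 2^{2n-5} + \dotsb + 2^{n-1}\). You instead exploit the recursive definition of the Graycode numbering: split \(Q_n\) into two consecutive copies of \(Q_{n-1}\) plus the matching, observe that internal edges never take the long way around the cycle so each copy contributes the linear Graycode wirelength \(L_{n-1}\), establish \(L_k = 2^{2k-1}-2^{k-1}\) from the recursion \(L_k = 2L_{k-1}+2^{2k-2}\), and compute the matching edges' cyclic contribution \(2^{2n-3}\) directly; these steps all check out, including the position map \(p \mapsto 2^n+1-p\), the pairing \(p \leftrightarrow 2^{n-1}+1-p\), and the min-of-two-arcs verification, and the totals agree with small cases (\(n=2,3\) give \(4\) and \(20\)). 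What your version buys is auditability: every length comes from an explicit position calculation and an induction, whereas the paper's multiplicities and length lists are read off the figure without further justification. What the paper's version buys is brevity---a one-shot closed-form summation once the geometric classification is granted. Both arguments compute only \(\cwl(Q_n,\mathrm{Graycode})\) and rely on the section's standing assumption (Guu's claimed optimality of the Graycode) to identify this value with \(\cwl(Q_n)\); you make that dependence explicit, which matches the paper's framing.
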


\begin{proof}[Proof of Claim]
This claim will be proven by summing the lengths of each wire in the Graycode numbering.  Starting
with the horizontal and vertical wires, there are 4 each of wires that have length \(1, \dotsc,
2^{n-1}-3, 2^{n-1}-1\).  There are \(2^2\) groups of edges along primary diagonals whose wires have
lengths \(1, \dotsc, 2^{n-2} - 3, 2^{n-2} - 1\).  There are \(2^3\) groups of edges along secondary
diagonals whose wires have lengths \(1, \dotsc, 2^{n-3} - 3, 2^{n-3} - 1\), etc.

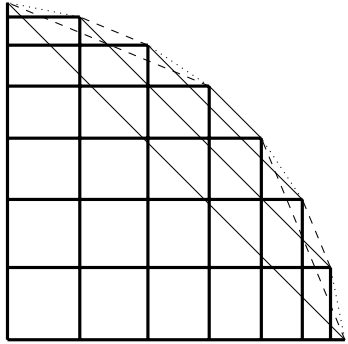
\begin{figure}[!h]
 \centering
  \begin{tikzpicture}
   \node[coordinate] (1) at (5.625:5) {};
   \node[coordinate] (2) at (16.875:5) {};
   \node[coordinate] (3) at (28.125:5) {};
   \node[coordinate] (4) at (39.375:5) {};
   \node[coordinate] (5) at (50.625:5) {};
   \node[coordinate] (6) at (61.875:5) {};
   \node[coordinate] (7) at (73.125:5) {};
   \node[coordinate] (8) at (84.375:5) {};
   \node[vertex] at (0,0) {};
   \draw (1) -- (8)
         (2) -- (7)
         (3) -- (6)
         (4) -- (5);
   \draw[dashed] (1) -- (4)
                 (2) -- (3)
                 (5) -- (8)
                 (6) -- (7);
   \draw[dotted] (1) -- (2)
                 (3) -- (4)
                 (5) -- (6)
                 (7) -- (8);
   \draw[very thick] (1) -- (1 -| 8)
                     (2) -- (2 -| 8)
                     (3) -- (3 -| 8)
                     (4) -- (4 -| 8)
                     (5) -- (5 -| 8)
                     (6) -- (6 -| 8)
                     (7) -- (7 -| 8)
                     (2) -- (2 |- 1)
                     (3) -- (3 |- 1)
                     (4) -- (4 |- 1)
                     (5) -- (5 |- 1)
                     (6) -- (6 |- 1)
                     (7) -- (7 |- 1)
                     (8) -- (8 |- 1);
  \end{tikzpicture}
 \caption[One quarter of a \(Q_5\) Graycode]{One quarter of a \(Q_5\) Graycode.\label{wirefig}}
\end{figure}
\vfill\eject
For example, Figure~\ref{wirefig} depicts a quarter of a \(Q_5\) Graycode.  The wires along the
primary diagonal are solid, the wires along the secondary diagonals are dashed, and the wires along
the tertiary diagonals are dotted.  The dot in the lower left corner is the location of the center
of the cycle.  Keep in mind, the horizontal and vertical wires have been truncated in the figure,
and that there are four times as many copies of each of the diagonal wires.

The grand sum of lengths of wires for \(Q_n\) is thus:

\begin{align*}
\cwl(Q_n) & = \begin{aligned}[t]
              & 4[(2^{n-1} - 1) + (2^{n-1} - 3) + \dotsb + 1] \\
              & + 2^2[(2^{n-2} - 1) + (2^{n-2} - 3) + \dotsb + 1] \\
              & + 2^3[(2^{n-3} - 1) + (2^{n-3} - 3) + \dotsb + 1] + \dotsb + 2^{n-1} \\
              \end{aligned}\\
          & = 4 (2^{2n-4}) + 2^2 (2^{2n-6}) + 2^3 (2^{2n-8}) + \dotsb + 2^{n-1} \\
          & = 2^{2n-2} + 2^{2n-4} + 2^{2n-5} + \dotsb + 2^{n-1} \\
          & = 2^{2n-2} + 2^{2n-3} - 2^{n-1}. \qedhere
\end{align*}
\end{proof}
Therefore, \(\ccw(Q_n) \geq \frac{\cwl(Q_n)}{2^n} = 2^{n-2} + 2^{n-3} - \frac{1}{2}\).  Or, in terms
of \(\lcw(Q_n)\):  \(\ccw(Q_n) \geq \floor*{ \frac{9}{16} \lcw(Q_n) }\).

\subsection{Open conjectures}
\label{conjectures}\({}\)\\
Reverse engineering this process starting with a desired lower bound gives the following:
\[ \cwl(Q_n) > 
     \begin{cases}
        2^n \left( \frac{5 \cdot 2^{n-2} - 1}{3} - 1 \right) & \text{if \(n\) is odd} \\
        2^n \left( \frac{5 \cdot 2^{n-2} - 2}{3} - 1 \right) & \text{if \(n\) is even.}
     \end{cases}
\]

\noindent
It would be nice to find this as the lower bound to \(\cwl(Q_n)\) for when a \(\frac{2}{3} /
\frac{1}{3}\) split does not exist.  If it is a lower bound, the CT conjecture would be verified.
This can be achieved if Guu's method can be applied with \(f(x) = \frac{5}{6} - k(x -
\frac{1}{2})^2\) and \(\type(S) \geq \frac{1}{3} 2^{n-1}\) (as opposed to her \(f(x) = \frac{3}{4} -
\frac{64}{7} (x - \frac{1}{2})^2\) and \(\type(S) \geq 2^{n-3}\)).

\section{Conclusion}
\label{conclusion}
When a \(\frac{2}{3} / \frac{1}{3}\) split exists, the Graycode numbering optimizes \(\ccw(Q_n)\).
The Graycode also is likely the optimal numbering for \(\cwl(Q_n)\), though there remain holes in
the proof.  If the Graycode does optimize \(\cwl(Q_n)\), the cyclic cutwidth problem is ``cut in
half.''  A stronger version of the cyclic wirelength problem could solve the cyclic cutwidth problem.

\section{Acknowledgments}
I would like to thank Dr.\ J.\ D.\ Chavez and Dr.\ R.\ Trapp for making this project an enjoyable
and productive experience.  This work was completed during the 2003 Research Experiences for
Undergraduates (REU) in Mathematics at California State University San Bernardino (CSUSB).  It was
sponsored jointly by CSUSB and NSF-REU Grant number DMS-0139426.

\bibliographystyle{plain}

\end{document}